\newtheorem{thm}{Theorem}[section]
\newtheorem{prop}[thm]{Proposition}
\newtheorem{lem}[thm]{Lemma}
\newcommand{\be}{\begin{equation}}
\newcommand{\ee}{\end{equation}}
\newcommand{\ben}{\begin{enumerate}}
\newcommand{\een}{\end{enumerate}}
\newcommand{\beq}{\begin{eqnarray}}
\newcommand{\eeq}{\end{eqnarray}}
\newcommand{\beqn}{\begin{eqnarray*}}
\newcommand{\eeqn}{\end{eqnarray*}}
\newcommand{\e}{\varepsilon}
\newcommand{\pa}{\partial}
\newcommand{\g}{{\bf g}}
\newcommand{\pxi}{ {\pa \over \pa x^i}}
\title{On the Second Approximate Matsumoto Metric}
\author{A. Tayebi, T. Tabatabaeifar and E. Peyghan}
\begin{document}   

\maketitle
\begin{abstract}
In this paper, we study the second approximate Matsumoto metric  $F=\alpha+\beta+\beta^2/\alpha+\beta^3/\alpha^2$  on a manifold $M$.  We prove that $F$ is of scalar flag curvature and isotropic  S-curvature if and only if it is isotropic Berwald metric with almost isotropic flag curvature.\\\\
{\bf {Keywords}}: Isotropic Berwald curvature, S-curvature, almost isotropic flag curvature.\footnote{ 2010 Mathematics subject Classification: 53C60, 53C25.}
\end{abstract}

\section{Introduction}
The flag curvature in Finsler geometry is a natural extension of the sectional curvature in Riemannian geometry, which is first introduced by L. Berwald. For a Finsler manifold $(M, F)$, the flag curvature is  a function ${\bf K}(P, y)$ of tangent planes $P\subset T_xM$ and  directions $y\in P$.  $F$  is said to be  of scalar flag curvature if the flag curvature ${\bf K}(P, y)={\bf K}(x, y)$ is independent of flags $P$ associated with any fixed flagpole $y$.  $F$ is called of almost isotropic flag curvature if
\be
{\bf K}=\frac{3c_{x^m}y^m}{F}+\sigma,\label{AIFC}
\ee
where $c=c(x)$ and $\sigma=\sigma(x)$ are   scalar functions  on $M$. One of the important problems in Finsler geometry is to characterize Finsler manifolds of almost isotropic flag curvature \cite{NST}.

To study the geometric properties of a Finsler metric, one also considers non-Riemannian quantities. In Finsler geometry,  there are several important non-Riemannian  quantities:  the  Cartan torsion ${\bf C}$,  the Berwald curvature ${\bf B}$, the  mean Landsberg curvature ${\bf J}$ and S-curvature ${\bf S}$, etc \cite{CS}\cite{LL}\cite{NST}\cite{TN}.  These are geometric quantities which vanish for Riemnnian metrics.

Among the non-Riemannian quantities, the S-curvature ${\bf S}={\bf S}(x, y)$ is closely related to the flag curvature
which constructed by Shen for given comparison theorems on Finsler manifolds. A Finsler metric $F$ is called of isotropic $S$-curvature if
\be
{\bf S}=(n+1)cF,\label{ISC}
\ee
for some scalar function $c=c(x)$ on $M$. In \cite{NST}, it is proved that if a
Finsler metric $F$ of scalar flag curvature is of isotropic S-curvature (\ref{ISC}), then it has almost isotropic flag curvature (\ref{AIFC}).

The geodesic curves of a Finsler metric $F=F(x,y)$ on a smooth manifold $M$, are determined  by $ \ddot c^i+2G^i(\dot c)=0$, where the local functions $G^i=G^i(x, y)$ are called the  spray coefficients.  A Finsler metric $F$ is  called a Berwald metric, if  $G^i$  are quadratic in $y\in T_xM$  for any $x\in M$.
A Finsler metric $F$ is said to be isotropic Berwald metric if its Berwald curvature  is in the following form
\be
B^i_{\ jkl}=c\Big\{F_{y^jy^k}\delta^i_{\ l}+F_{y^ky^l}\delta^i_{\ j}+F_{y^ly^j}\delta^i_{\ k}+F_{y^jy^ky^l}y^i\Big\},\label{IBC}\\
\ee
where $c=c(x)$ is a  scalar function  on $M$ \cite{CS}.

As a generalization of Berwald curvature, B\'{a}cs\'{o}-Matsumoto  proposed the notion of Douglas curvature \cite{BM}.  A Finsler metric is called a Douglas metric if  $G^i=\frac{1}{2}\Gamma^i_{jk}(x)y^jy^k + P(x,y)y^i$.

In order to find explicit examples of Douglas metrics, we consider $(\alpha, \beta)$-metrics. An $(\alpha, \beta)$-metric is a Finsler metric of the form $F:=\alpha \phi({\beta\over\alpha})$, where  $\phi=\phi(s)$ is a $C^\infty$ on $(-b_0, b_0)$ with certain regularity, $\alpha=\sqrt{a_{ij}(x)y^iy^j}$ is a Riemannian metric and $\beta =b_i(x)y^i$ is a 1-form on  $M$. This class of metrics is were first introduced by Matsumoto \cite{Mat5}. Among the $(\alpha, \beta)$-metrics, the  Matsumoto metric is special and significant metric which constitute a majority of actual research. The Matsumoto metric is expressed as
\[
F=\alpha\Big[1+\frac{\beta}{\alpha}+\big(\frac{\beta}{\alpha}\big)^2+\big(\frac{\beta}{\alpha}\big)^3+\cdots \Big].
\]
This metric was introduced by Matsumoto as a realization of Finsler's idea  ``a slope measure of a mountain with respect to a time measure" \cite{TPS}.  In the Matsumoto metric, the 1-form $\beta=b_iy^i$ was originally to be induced by earth gravity. Hence, we could regard $b_i(x)$ as the the infinitesimals and neglect the infinitesimals of degree of $b_i(x)$ more than two \cite{PC1}\cite{PC2}\cite{PC3}\cite{PLP}\cite{PLPK}. An approximate Matsumoto metric is a Finsler metric in the following form
\be
F=\alpha\Bigg[\sum_{k=0}^r (\frac{\beta}{\alpha})^k\Bigg],\label{a3}
\ee
where $|\beta|<|\alpha|$ (for more information, see \cite{PC2}). This metric was introduced by Park-Choi in \cite{PC2}. By definition, the Matsumoto metric is expressed as $\lim_{r\rightarrow \infty}L(\alpha,\beta)=\frac{\alpha^2}{\alpha - \beta}$.

In this paper, we consider second  approximate Matsumoto metric $F=\alpha+\beta+\frac{\beta^2}{\alpha}+\frac{\beta^3}{\alpha^2}$ with some non-Riemannian curvature properties and  prove the following.

\begin{thm}\label{MAINTHM}
Let $F=\alpha+\beta+\frac{\beta^2}{\alpha}+\frac{\beta^3}{\alpha^2}$ be a non-Riemannian second approximate Matsumoto metric on a manifold $M$ of dimension $n$. Then $F$ is of scaler flag curvature with isotropic $S$-curvature (\ref{ISC}), if and only if it has isotropic Berwald curvature (\ref{IBC}) with almost isotropic flag curvature (\ref{AIFC}). In this case, $F$ must be locally Minkowskian.
\end{thm}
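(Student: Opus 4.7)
My plan is to follow the standard two-stage strategy for classifying $(\alpha,\beta)$-metrics with prescribed non-Riemannian data. For the forward direction I first use the isotropic $S$-curvature hypothesis to force $\beta$ to be parallel with respect to $\alpha$, and then I exploit the rigidity of Berwald metrics of scalar flag curvature. With $\phi(s)=1+s+s^2+s^3$ the metric at hand is the $(\alpha,\beta)$-metric $F=\alpha\phi(\beta/\alpha)$, so the first step is to write down the spray coefficients in the standard Shen form
$$
G^i=G^i_\alpha+\alpha Q\,s^i_{\ 0}+(-2\alpha Q\,s_0+r_{00})(\Psi\,b^i+\alpha^{-1}\Theta\,y^i),
$$
where $Q,\Theta,\Psi$ are the usual rational functions of $s=\beta/\alpha$ built from $\phi$, and $r_{ij},s_{ij}$ are the symmetric and antisymmetric parts of $b_{i|j}$; and then to invoke the Cheng--Shen formula for ${\bf S}$, which expresses the $S$-curvature as a rational function of $s$ whose coefficients are linear combinations of $r_{00}$, $s_0$ and the norm $b^2=a^{ij}b_ib_j$.

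The heart of the argument is then the following. Setting ${\bf S}=(n+1)c(x)F$ and clearing the $\alpha$-denominators yields a polynomial identity of the shape $A(x,y)+\alpha B(x,y)=0$, where $A$ and $B$ are polynomials in $y$ whose coefficients are tensorial in $x$. Since $\alpha=\sqrt{a_{ij}y^iy^j}$ is irrational in $y$, the equations $A=0$ and $B=0$ must hold separately, and each splits further by powers of $\beta$ into coefficient relations on $r_{ij}$, $s_{ij}$, $b_i$ and $a_{ij}$. Systematically matching these coefficients is the main computational obstacle, because the cubic polynomial $\phi$ together with the extra $\beta^3/\alpha^2$ term produces a longer system than in the Randers or first approximate Matsumoto cases, and one must track more powers of $\alpha$ than usual. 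The expected outcome is $r_{ij}=0$ and $s_j=0$, and in fact $b_{i|j}=0$, i.e.\ $\beta$ is parallel with respect to $\alpha$.

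Once $\beta$ is parallel the spray coefficients collapse to $G^i=G^i_\alpha$, so $F$ is a Berwald metric with ${\bf B}\equiv 0$, which trivially realizes (\ref{IBC}) with $c\equiv 0$. Combined with the assumption of scalar flag curvature and the non-Riemannian hypothesis, the classical Akbar-Zadeh/Numata-type rigidity for Berwald metrics of scalar flag curvature forces ${\bf K}\equiv 0$, so $F$ is locally Minkowskian, and (\ref{AIFC}) is immediate with $c\equiv\sigma\equiv 0$ (it is in any case granted by the Najafi--Shen--Tayebi result quoted in the introduction). For the converse direction, (\ref{AIFC}) immediately implies scalar flag curvature; applying the same polynomial-separation strategy to the isotropic Berwald equation (\ref{IBC}) for this specific $\phi$ again forces $\beta$ to be parallel, and then $F$ is Berwald with isotropic mean Berwald curvature, so that (\ref{ISC}) follows from the identity $E_{jk}=\tfrac{1}{2}{\bf S}_{y^jy^k}$ together with the homogeneity of ${\bf S}$, closing the loop through the same locally Minkowskian conclusion.
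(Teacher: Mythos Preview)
Your overall architecture is close to the paper's, but there is a genuine gap in the forward direction. You assert that the polynomial identity coming from ${\bf S}=(n+1)cF$ should yield ``$r_{ij}=0$ and $s_j=0$, and in fact $b_{i|j}=0$''. The first two conclusions are correct (this is exactly Cheng--Shen's Lemma applied to $\phi=1+s+s^2+s^3$, and it is what the paper proves in its Proposition 3.1), but the leap to $b_{i|j}=0$ is unjustified: the $S$-curvature formula for an $(\alpha,\beta)$-metric involves only $r_{00}$, $r_0$ and $s_0$, never the full $s_{i0}$, so no amount of rational/irrational separation in the equation ${\bf S}=(n+1)cF$ can detect $s_{ij}$ beyond the contraction $s_j=b^is_{ij}$. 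Consequently, after your first step the spray is still $G^i=G^i_\alpha+\alpha Q\,s^i_{\ 0}$, $F$ is not yet Berwald, and you cannot invoke Numata-type rigidity.

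The paper closes this gap by bringing in the scalar flag curvature hypothesis explicitly. With $r_{ij}=s_j=0$ in hand, it contracts the Akbar--Zadeh identity $J_{i;m}y^m+{\bf K}F^2I_i+\tfrac{n+1}{3}F^2{\bf K}_i=0$ against $b^i$, uses that $\bar J=J_ib^i=0$ in this situation, and obtains
\[
s_{i0}s^i_{\ 0}\,\Delta-{\bf K}\,\alpha^2\phi(\phi-s\phi')(b^2-s^2)=0.
\]
Clearing denominators and separating rational/irrational parts in $\alpha$ now \emph{does} see $s_{i0}$, and the highest $\beta$-power forces ${\bf K}=0$, whence $s_{i0}s^i_{\ 0}=0$ and, by positive-definiteness of $\alpha$, $s_{ij}=0$. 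Only then is $\beta$ parallel and $F$ Berwald (hence locally Minkowskian). In dimension $2$ Schur fails, and the paper argues separately via Szab\'o's rigidity for Berwald surfaces. For the converse direction, rather than reanalyzing (\ref{IBC}) by polynomial separation as you propose, the paper simply cites that isotropic Berwald metrics have isotropic $S$-curvature, which is both shorter and avoids a second long computation.
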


%--------------------------------------------------------------------------------------------------------------------
\section{Preliminaries}
%--------------------------------------------------------------------------------------------------------------------
Let $M$ be a n-dimensional $ C^\infty$ manifold. Denote by $T_x M $ the tangent space at $x \in M$,  by $TM=\cup _{x \in M} T_x M $ the tangent bundle of $M$, and by $TM_{0} = TM \setminus \{ 0 \}$ the slit tangent bundle on $M$. A  Finsler metric on $M$ is a function $ F:TM \rightarrow [0,\infty)$ which has the following properties:\\
(i) $F$ is $C^\infty$ on $TM_{0}$;\\
(ii) $F$ is positively 1-homogeneous on the fibers of tangent bundle $TM$;\\
(iii) for each $y\in T_xM$, the following quadratic form ${\bf g}_y$ on
$T_xM$  is positive definite,
\[
{\bf g}_{y}(u,v):={1 \over 2} \frac{\partial^2}{\partial s \partial t}\left[  F^2 (y+su+tv)\right]|_{s,t=0}, \ \
u,v\in T_xM.
\]
Let  $x\in M$ and $F_x:=F|_{T_xM}$.  To measure the non-Euclidean feature of $F_x$, define ${\bf C}_y:T_xM\otimes T_xM\otimes T_xM\rightarrow \mathbb{R}$ by
\[
{\bf C}_{y}(u,v,w):={1 \over 2} \frac{d}{dt}\Big[{\bf g}_{y+tw}(u,v)
\Big]|_{t=0}, \ \ u,v,w\in T_xM.
\]
The family ${\bf C}:=\{{\bf C}_y\}_{y\in TM_0}$  is called the Cartan torsion. It is well known that ${\bf{C}}=0$ if and only if $F$ is Riemannian \cite{ShDiff}. For  $y \in T_xM_0$, define  mean Cartan torsion ${\bf I}_y$ by ${\bf I}_y(u) := I_i(y)u^i$, where $I_i:=g^{jk}C_{ijk}$. By Diecke Theorem, $F$ is Riemannian if and only if ${\bf I}_y=0$.

The horizontal covariant derivatives of ${\bf I}$ along geodesics give rise to  the mean Landsberg curvature ${\bf J}_y(u):=J_i (y)u^i$, where $J_i:=I_{i|s}y^s$. A Finsler metric is said to be  weakly Landsbergian if ${\bf J}=0$.

\bigskip

Given a Finsler manifold $(M,F)$, then a global vector field ${\bf G}$ is induced by $F$ on $TM_0$, which in a standard coordinate $(x^i,y^i)$ for $TM_0$ is given by ${\bf G}=y^i {{\partial} \over {\partial x^i}}-2G^i(x,y){{\partial} \over {\partial y^i}}$, where
\[
G^i:=\frac{1}{4}g^{il}\Big[\frac{\partial^2(F^2)}{\partial x^k \partial y^l}y^k-\frac{\partial(F^2)}{\partial x^l}\Big],\ \ \ \  y\in T_xM.
\]
The ${\bf G}$ is called the  spray associated  to $(M,F)$.  In local coordinates, a curve $c(t)$ is a geodesic if and only if its coordinates $(c^i(t))$ satisfy $ \ddot c^i+2G^i(\dot c)=0$.

\bigskip

For a tangent vector $y \in T_xM_0$, define ${\bf B}_y:T_xM\otimes T_xM \otimes T_xM\rightarrow T_xM$ and ${\bf E}_y:T_xM \otimes T_xM\rightarrow \mathbb{R}$ by ${\bf B}_y(u, v, w):=B^i_{\ jkl}(y)u^jv^kw^l{{\partial } \over {\partial x^i}}|_x$ and ${\bf E}_y(u,v):=E_{jk}(y)u^jv^k$
where
\[
B^i_{\ jkl}:={{\partial^3 G^i} \over {\partial y^j \partial y^k \partial y^l}},\ \ \ E_{jk}:={{1}\over{2}}B^m_{\ jkm}.
\]
The $\bf B$ and $\bf E$ are called the Berwald curvature and mean Berwald curvature, respectively.  Then $F$ is called a Berwald metric and weakly Berwald metric if $\bf{B}=0$ and $\bf{E}=0$, respectively.

A Finsler metric $F$ is said to be isotropic mean Berwald metric if its  mean Berwald curvature is in the following form
\be
E_{ij}=\frac{n+1}{2F}ch_{ij},\label{IMBC}
\ee
where $c=c(x)$ is a  scalar function  on $M$ and $h_{ij}$ is the angular metric \cite{CS}.

\bigskip

Define ${\bf D}_y:T_xM\otimes T_xM \otimes T_xM\rightarrow T_xM$  by
${\bf D}_y(u,v,w):=D^i_{\ jkl}(y)u^iv^jw^k\frac{\partial}{\partial x^i}|_{x}$ where
\[
D^i_{\ jkl}:=B^i_{\ jkl}-{2\over
n+1}\{E_{jk}\delta^i_l+E_{jl}\delta^i_k+E_{kl}\delta^i_j+E_{jk,l}
y^i\}.
\]
We call ${\bf D}:=\{{\bf D}_y\}_{y\in TM_{0}}$ the Douglas curvature. A Finsler metric with ${\bf D}=0$ is called a Douglas metric. The notion of Douglas metrics was proposed by B$\acute{a}$cs$\acute{o}$-Matsumoto as a generalization  of Berwald metrics \cite{BM}.

For a Finsler metric $F$ on an $n$-dimensional manifold $M$, the
Busemann-Hausdorff volume form $dV_F = \sigma_F(x) dx^1 \cdots
dx^n$ is defined by
$$
\sigma_F(x) := {{\rm Vol} (\Bbb B^n(1))
\over {\rm Vol} \Big \{ (y^i)\in R^n \ \Big | \ F \Big ( y^i
\pxi|_x \Big ) < 1 \Big \} } .\label{dV}
$$
In general, the local scalar function $\sigma_F(x)$ can not be
expressed in terms of elementary functions, even $F$ is locally
expressed by elementary functions. Let $G^i$ denote the geodesic coefficients of $F$ in the same
local coordinate system. The S-curvature can be defined by
$$
 {\bf S}({\bf y}) := {\pa G^i\over \pa y^i}(x,y) - y^i {\pa \over \pa x^i}
\Big [ \ln \sigma_F (x)\Big ],\label{Slocal}
$$
where ${\bf y}=y^i\pxi|_x\in T_xM$. It is proved that ${\bf S}=0$ if $F$ is a Berwald metric. There are many non-Berwald metrics satisfying ${\bf S}=0$. ${\bf S}$ said to be  {\it isotropic} if there is a scalar functions $c(x)$ on $M$ such that ${\bf S}=(n+1)c(x) F$.

\bigskip

The Riemann curvature ${\bf R}_y= R^i_{\ k}  dx^k \otimes \pxi|_x :
T_xM \to T_xM$ is a family of linear maps on tangent spaces, defined
by
\[
R^i_{\ k} = 2 {\pa G^i\over \pa x^k}-y^j{\pa^2 G^i\over \pa
x^j\pa y^k} +2G^j {\pa^2 G^i \over \pa y^j \pa y^k} - {\pa G^i \over
\pa y^j} {\pa G^j \over \pa y^k}.\label{Riemann}
\]
For a flag $P={\rm span}\{y, u\} \subset T_xM$ with flagpole $y$, the  flag curvature ${\bf
K}={\bf K}(P, y)$ is defined by
\[
{\bf K}(P, y):= {\g_y (u, {\bf R}_y(u)) \over \g_y(y, y) \g_y(u,u)
-\g_y(y, u)^2 }.
\]
We say that a Finsler metric $F$ is   of scalar curvature if for any $y\in
T_xM$, the flag curvature ${\bf K}= {\bf K}(x, y)$ is a scalar
function on the slit tangent bundle $TM_0$. In this case, for some scalar function ${\bf K}$ on $TM_0$ the Riemann curvature is in the following form
\[
R^i_{\ k}= {\bf K}F^2 \{ \delta^i_k - F^{-1}F_{y^k} y^i \}.
\]
If ${\bf K}=constant$, then $F$ is said to be of  constant flag curvature.  A Finsler metric $F$ is called {\it isotropic flag curvature}, if ${\bf K}= {\bf K}(x)$.

\section{Proof of Theorem \ref{MAINTHM}}
Let $F=\alpha\phi(s)$, $s=\frac{\beta}{\alpha}$ be an $(\alpha, \beta)$-metric,  where  $\phi=\phi(s)$ is a $C^\infty$ on $(-b_0, b_0)$ with certain regularity, $\alpha =\sqrt{a_{ij}(x)y^iy^j}$ is a Riemannian metric and  $\beta = b_i(x)y^i$  is a 1-form on a manifold $M$. Let
\[
r_{ij}:= {1\over 2} \Big[b_{i|j}+b_{j|i} \Big], \ \ \ \
s_{ij} := {1\over 2} \Big[ b_{i|j} - b_{j|i} \Big].
\]
\[
r_j := b^i r_{ij}, \ \ \ \ s_j:=b^i s_{ij}.
\]
where $b_{i|j}$ denote the coefficients of the covariant derivative of $\beta$ with respect to $\alpha$. Let
\[
r_{i0}: = r_{ij}y^j, \ \  s_{i0}:= s_{ij}y^j, \ \ r_0:= r_j y^j, \ \ s_0 := s_j y^j.
\]
Put
\begin{align}
\nonumber Q&=\frac{\phi'}{\phi- s\phi},\\
\nonumber \Theta &=\frac{\phi \phi' -s(\phi \phi'' +\phi'^2)}{2\phi\big[(\phi -s\phi')+(b^2-s^2)\phi''\big]}\\
\Psi &=\frac{\phi''}{2\big[(\phi -s\phi')+(b^2-s^2)\phi''\big]}.\label{a2}
\end{align}
Then the $S$-curvature is given by
\begin{eqnarray}
\nonumber {\bf S}=\Big[Q'-2\Psi Qs\!\!\!\!&-&\!\!\!\!\ 2(\Psi Q)'(b^2-s^2)-2(n+1)Q\Theta +2\lambda \Big]s_0\\
\!\!\!\!&+&\!\!\!\!\ 2(\Psi +\lambda)s_0+\alpha^{-1}\Big[(b^2-s^2)\Psi' +(n+1)\Theta \Big]r_{00}.\label{a3}
\end{eqnarray}
Let us put
\begin{eqnarray}
\nonumber \Delta\!\!\!\!&:=&\!\!\!\!\ 1+sQ+(b^2-s^2)Q',\\
\nonumber \Phi \!\!\!\!&:=&\!\!\!\!\ -(n\Delta +1+sQ)(Q-sQ')-(b^2-s^2)(1 + sQ)Q''.
\end{eqnarray}
In \cite{ChSh3}, Cheng-Shen  characterize $(\alpha,\beta)$-metrics with isotropic
S-curvature.
\begin{lem} \label{ChSh}{\rm(\cite{ChSh3})} \emph{Let  $ F =\alpha \phi (\beta/\alpha)$ be an $(\alpha,\beta)$-metric on an $n$-manifold. Then, $F$ is of isotropic S-curvature $ {\bf S} = (n+1) cF$, if and only if one of the following holds
\item[(i)] $\beta $ satisfies
\be
r_{ij} = \e \Big \{ b^2 a_{ij} -b_i b_j \Big \}, \ \ \ \ \ s_j=0, \label{CS1}
\ee
where $\e= \e(x)$ is a scalar function, and $\phi=\phi(s)$ satisfies
\be
 \Phi = -2 (n+1) k \frac{ \phi \Delta^2}{b^2-s^2}, \label{CS}
\ee
where $k$ is a constant. In this case, $c=k\epsilon$.
\item[(ii)] $\beta$ satisfies
\be
r_{ij} =0,\ \ \ \ \ s_j =0.\label{CS2}
\ee
In this case, $ c =0$.}
\end{lem}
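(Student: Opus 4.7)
The plan is to handle the backward implication via general Finsler identities, then reduce the forward implication to a polynomial identity via Lemma~\ref{ChSh} applied to the specific choice $\phi(s) = 1 + s + s^2 + s^3$, and finally close with a Berwald rigidity argument.

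The backward direction is a direct contraction. Tracing (\ref{IBC}) on the indices $i$ and $l$, and using the $1$-homogeneity identities $F_{y^jy^k}y^k = 0$ and $F_{y^jy^ky^i}y^i = -F_{y^jy^k}$ together with $h_{jk} = F F_{y^jy^k}$, yields isotropic mean Berwald curvature (\ref{IMBC}); since $E_{jk}$ is the second vertical derivative of ${\bf S}$ and the expressions involved are all $1$-homogeneous in $y$, integration recovers ${\bf S} = (n+1)cF$, i.e.\ (\ref{ISC}). Almost isotropic flag curvature (\ref{AIFC}) is by definition a special case of scalar flag curvature, so the backward hypotheses do imply the forward ones. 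Conversely, assume scalar flag curvature and isotropic $S$-curvature. The result from \cite{NST} quoted in the introduction immediately yields (\ref{AIFC}), so only (\ref{IBC}) and the Minkowskian conclusion remain to be established.

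By Lemma~\ref{ChSh}, $\beta$ satisfies either case (i), where $r_{ij} = \e(b^2 a_{ij} - b_ib_j)$, $s_j = 0$, and $\phi$ satisfies the ODE (\ref{CS}) for a constant $k$, or case (ii), in which $r_{ij} = 0,\ s_j = 0,\ c = 0$. The crux is to eliminate case (i) for the given $\phi$. Compute $\phi' = 1 + 2s + 3s^2$, $\phi'' = 2 + 6s$, $\phi - s\phi' = 1 - s^2 - 2s^3$, and substitute into $Q$, $\Delta$, and $\Phi$. After clearing denominators, equation (\ref{CS}) becomes a polynomial identity in the single variable $s$ whose coefficients are polynomials in $b^2$ and affine in $k$; matching the highest-degree coefficients in $s$ forces $k = 0$, and then vanishing of the remaining coefficients forces $\e = 0$. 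Thus case (i) collapses to case (ii), and in all cases $r_{ij} = 0,\ s_j = 0,\ c = 0$, so ${\bf S} = 0$.

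To conclude, combine the relations $r_{ij} = 0$ and $s_j = 0$ with the scalar flag curvature hypothesis. Substituting into the spray formula for $(\alpha,\beta)$-metrics and then into the Riemann tensor, the condition $R^i_k = {\bf K} F^2(\delta^i_k - F^{-1}F_{y^k}y^i)$ becomes a polynomial identity in $y$; separating the rational part from the irrational part carried by $\alpha = \sqrt{a_{ij}y^iy^j}$ forces $s_{ij} = 0$. Hence $b_{i|j} = 0$, i.e.\ $\beta$ is parallel with respect to $\alpha$, so the Berwald curvature of $F$ vanishes, which is (\ref{IBC}) with $c = 0$. Szab\'o's rigidity theorem then implies that a Berwald metric of scalar flag curvature is either Riemannian or has zero flag curvature; by the non-Riemannian hypothesis, ${\bf K} \equiv 0$, so $F$ is locally Minkowskian. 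The main obstacle is the polynomial analysis in case (i): one must carefully control the dependence on $b^2$ to confirm that all coefficients vanish only in the trivial situation $k = \e = 0$.
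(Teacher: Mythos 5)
Your proposal does not prove the statement you were asked to prove. The statement is Lemma \ref{ChSh} itself: the Cheng--Shen characterization asserting that a \emph{general} $(\alpha,\beta)$-metric $F=\alpha\phi(\beta/\alpha)$ has isotropic S-curvature ${\bf S}=(n+1)cF$ if and only if either (\ref{CS1}) holds together with the ODE (\ref{CS}) for $\phi$, or (\ref{CS2}) holds with $c=0$. What you have written is instead an outline of the proof of Theorem \ref{MAINTHM} for the specific metric $\phi(s)=1+s+s^2+s^3$: you discuss the equivalence of (\ref{IBC}), (\ref{IMBC}), (\ref{AIFC}) and (\ref{ISC}), the elimination of case (i), the vanishing of $s_{ij}$, and the locally Minkowskian conclusion. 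Worse, your argument explicitly invokes ``Lemma~\ref{ChSh}'' as a black box in its second paragraph, so with respect to the assigned statement the proposal is circular: the one thing that needed to be established is the one thing you assume. (For context, the paper itself gives no proof of this lemma either --- it is imported verbatim from \cite{ChSh3} --- but that does not make your argument a proof of it.)

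A genuine proof of the lemma would have to start from the S-curvature formula (\ref{a3}) for a general $(\alpha,\beta)$-metric, impose ${\bf S}=(n+1)c\alpha\phi(s)$, and analyze the resulting identity in $y$ by separating the parts rational in $y$ from those carrying odd powers of $\alpha$, ultimately showing that $r_{00}$ must be proportional to $\epsilon(b^2\alpha^2-\beta^2)$ with $s_0=0$, and that the remaining functional equation in $s$ forces $\phi$ to satisfy (\ref{CS}) with $c=k\epsilon$ for a constant $k$ (or else $r_{ij}=s_j=0$ and $c=0$). None of that analysis appears in your proposal; the conditions (\ref{CS1}), (\ref{CS}), (\ref{CS2}) are never derived, and the constant $k$ and the relation $c=k\epsilon$ are never addressed. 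You should either supply that derivation or, as the authors do, cite \cite{ChSh3} --- but the text you have written belongs to the proof of Theorem \ref{MAINTHM}, not here.
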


\bigskip

Let
\begin{eqnarray}
\nonumber \Psi_1 \!\!\!\!&:=&\!\!\!\!\ \sqrt{b^2-s^2}\Delta^{\frac{1}{2}}\Big[\frac{\sqrt{b^2-s^2}\Phi}{\Delta^{\frac{3}{2}}}\Big]',\\
\nonumber \Psi_2\!\!\!\!&:=&\!\!\!\!\ 2(n+1)(Q-sQ')+3\frac{\Phi}{\Delta},\\
\theta \!\!\!\!&:=&\!\!\!\!\ \frac{Q-sQ'}{2\Delta}.\label{b1}
\end{eqnarray}
Then the formula for the mean Cartan torsion of an $(\alpha,\beta)$-metric is given by following
\begin{align}
\nonumber I_i&=\frac{1}{2}\frac{\partial }{\partial y^i}\Big[(n+1)\frac{\phi'}{\phi}-(n-2)\frac{s\phi''}{\phi-s\phi'}-\frac{3s\phi''-(b^2-s^2)\phi'''}
{(\phi-s\phi')+(b^2-s^2)\phi''}\Big]\\
&=-\frac{\Phi(\phi- s\phi')}{2\Delta \phi \alpha^2}(\alpha b_i-sy_i).\label{bb}
\end{align}
In \cite{ChWW}, it is proved that the condition $\Phi=0$ characterizes the Riemannian metrics among $(\alpha,\beta)$-metrics. Hence, in the continue, we suppose that $\Phi\neq 0$.

\bigskip

Let $G^i=G^i(x,y)$ and $\bar{G}^i_{\alpha}=\bar{G}^i_{\alpha}(x,y)$ denote the coefficients  of $F$ and  $\alpha$ respectively in the same coordinate system. By definition, we have
\be
G^i = \bar{G}^i_{\alpha} + P y^i + Q^i,\label{spray}
\ee
where
\begin{eqnarray*}
P \!\!\!\!&:=&\!\!\!\!\  \alpha^{-1} \Theta \Big[ -2 Q \alpha  s_0 +  r_{00}\Big ]\\
Q^i \!\!\!\!&:=&\!\!\!\!\ \alpha Q s^i_{\ 0} + \Psi \Big [-2 Q\alpha  s_0 +
r_{00}   \Big ]  b^i.
\end{eqnarray*}
Simplifying (\ref{spray}) yields the following
\begin{equation}\label{b2}
G^i=\bar{G}^i_{\alpha}+\alpha Qs^i_0 +\theta (-2\alpha Qs_0+r_{00})\Big[\frac{y^i}{\alpha}+\frac{Q'}{Q-sQ'}b^i\Big].
\end{equation}
Clearly, if $\beta $ is parallel with respect to $\alpha$ ($r_{ij}=0$ and $s_{ij}=0$), then $P=0$ and  $Q^i=0$. In this case, $G^i =\bar{G}^i_{\alpha}$ are quadratic in $y$, and $F$ is a Berwald metric.

\bigskip

For an $(\alpha,\beta)$-metric $F=\alpha \phi(s)$, the mean Landsberg curvature is given by
\begin{eqnarray}
\nonumber J_i=\!\!\!\!&-&\!\!\!\!\!\!\!\ \frac{1}{2\Delta \alpha^4}\Bigg[\frac{2\alpha^2}{b^2-s^2}\big[\frac{\Phi}{\Delta}+(n+1)(Q-sQ')\big](r_0+s_0)h_i\\
\nonumber \!\!\!\!&+&\!\!\!\!\!\!\!\ \frac{\alpha}{b^2-s^2}(\Psi_1 +s\frac{\Phi}{\Delta})(r_{00}-2\alpha Qs_0)h_i +\alpha\Big[-\alpha Q's_0h_i+\alpha Q(\alpha^2 s_i-y_is_0)\\
\!\!\!\!&+&\!\!\!\!\!\!\!\ \alpha^2 \Delta s_{i0}+\alpha^2(r_{i0}-2\alpha Qs_i)-(r_{00}-2\alpha Qs_0)y_i\Big]\frac{\Phi}{\Delta}\Bigg].\label{b3}
\end{eqnarray}
Contracting (\ref{b3}) with $b^i=a^{im}b_m$ yields
\begin{eqnarray}
\bar{J}:=J_ib^i=-\frac{1}{2\Delta \alpha^2}\Big[\Psi_1(r_{00}-2\alpha Qs_0)+\alpha \Psi_2(r_0+s_0)\Big].\label{b4}
\end{eqnarray}
The horizontal covariant derivatives $J_{i;m}$ and $J_{i|m}$ of $J_i$ with respect to $F$ and $\alpha$, respectively,  are given by
\begin{align*}
J_{i;m}&=\frac{\partial J_i}{\partial x^m}-J_l\Gamma^l_{im}-\frac{\partial J_i}{\partial y^l}N^l_m, \\      J_{i|m}&=\frac{\partial J_i}{\partial x^m}-J_l\bar{\Gamma}^l_{im}-\frac{\partial J_i}{\partial y^l}\bar{N}^l_m.
\end{align*}
Then  we have
\begin{equation}
J_{i;m}y^m=J_{i|m}y^m-J_l(N^l_i-\bar{N}_i^l)-2\frac{\partial J_i}{\partial y^l}(G^l-\bar{G}^l).\label{o0}
\end{equation}
Let $F$ be a Finsler metric of scalar flag curvature ${\bf K}$. By Akbar-Zadeh's theorem it satisfies following
\be
A_{ijk;s;m}y^sy^m+{\bf K}F^2 A_{ijk}+\frac{F^2}{3}\Big[h_{ij}{\bf K}_k+h_{jk}{\bf K}_j+h_{ki}{\bf K}_j\Big]=0,\label{Akbar1}
\ee
where $A_{ijk}=FC_{ijk}$ is the Cartan torsion and ${\bf K}_i=\frac{\partial {\bf K}}{\partial y^i}$ \cite{BCS}. Contracting (\ref{Akbar1}) with $g^{ij}$ yields
\be
J_{i;m}y^m+{\bf K}F^2I_i+\frac{n+1}{3}F^2{\bf K}_i=0.\label{Akbar2}
\ee
By (\ref{o0}) and (\ref{Akbar2}), for an $(\alpha,\beta)$-metric $F=\alpha \phi(s)$ of constant flag curvature ${\bf K}$, the following holds
\begin{equation}
J_{i|m}-J_l\frac{\partial(G^l-\bar{G}^l)}{\partial y^i}b^i-2\frac{\partial \bar{J}}{\partial y^l}(G^l-\bar{G}^l){\bf K}\alpha^2\phi^2 I_i=0.\label{o1}
\end{equation}
Contracting (\ref{o1}) with  $b^i$ implies that
\begin{equation}\label{b5}
\bar{J}_{|m}y^m-J_ia^{ik}b_{k|m}y^m-J_l\frac{\partial(G^l-\bar{G}^l)}{\partial y^i}b^i-2\frac{\partial \bar{J}}{\partial y^l}(G^l-\bar{G}^l)+\boldsymbol{K}\alpha^2 \phi^2 I_ib^i=0.
\end{equation}

\bigskip

There exists a relation between mean Berwald curvature ${\bf E}$ and the S-curvature ${\bf S}$. Indeed, taking twice vertical covariant derivatives  of the S-curvature gives rise the $E$-curvature.   It is easy to see that,  every Finsler metric of isotropic S-curvature (\ref{ISC}) is of isotropic mean Berwald curvature (\ref{IMBC}).  Now, is the equation ${\bf S}= (n+1) c F$ equivalent to the equation ${\bf E}= \frac{n+1}{2} c F^{-1}{\bf h}$?

Recently, Cheng-Shen prove that a Randers metric $F=\alpha+\beta$ is of isotropic $S$-curvature if and only if it is of isotropic $E$-curvature \cite{ChSh2}. Then,  Chun-Huan-Cheng extend this equivalency to the Finsler metric  $F=\alpha^{-m}(\alpha+\beta)^{m+1}$ for every real constant $m$,  including Randers metric \cite{CC}.  In \cite{Cui}, Cui extend their result and show that for the Matsumoto metric $F=\frac{\alpha^2}{\alpha-\beta}$ and the special  $(\alpha, \beta)$-metric $F=\alpha+\epsilon\beta+\kappa(\beta^2/\alpha)$ $(\kappa\neq 0)$, these notions are equivalent.

\bigskip

To prove Theorem \ref{MAINTHM}, we need the following.

\begin{prop}\label{prop1}
Let  $F=\alpha+\beta+\frac{\beta^2}{\alpha}+\frac{\beta^3}{\alpha^2}$ be a second approximate Matsumoto metric on a  manifold $M$ of dimension $n$. Then  the following are equivalent
\begin{description}
\item[(i)]  $F$ has isotropic $S$-curvature, ${\bf S}=(n+1)c(x)F$;
\item[(ii)]  $F$ has isotropic mean Berwald curvature, ${\bf E}=\frac{n+1}{2}c(x)F^{-1}{\bf h}$;
\end{description}
where $c=c(x)$ is a scalar function on the manifold $M$. In this case, ${\bf S}=0$. Then $\beta$ is  a Killing $1$-form with constant length with respect to $\alpha$, that is, $r_{00}=0$.
\end{prop}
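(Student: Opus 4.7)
The implication (i)$\Rightarrow$(ii) is immediate. Since $E_{ij}=\tfrac12\mathbf{S}_{y^iy^j}$ by definition and $\mathbf{S}=(n+1)c(x)F$ with $c(x)$ independent of $y$, differentiating twice in $y$ and using the relation $h_{ij}=FF_{y^iy^j}$, valid because $F$ is positively $1$-homogeneous, gives $E_{ij}=\tfrac{n+1}{2}c(x)F^{-1}h_{ij}$. For the reverse direction (ii)$\Rightarrow$(i), the identity $\mathbf{S}_{y^iy^j}=2E_{ij}=(n+1)c(x)F_{y^iy^j}$ together with the joint $1$-homogeneity of $\mathbf{S}$ and $F$ forces $\mathbf{S}=(n+1)c(x)F+\eta_i(x)y^i$ for some $1$-form $\eta_i(x)\,dx^i$ on $M$. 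To eliminate the linear term, I would substitute the $(\alpha,\beta)$-metric formula (\ref{a3}) for $\mathbf{S}$ with $\phi(s)=1+s+s^2+s^3$, multiply through by a suitable power of $\alpha$ to clear $s=\beta/\alpha$ from denominators, and exploit the fact that $\alpha=\sqrt{a_{ij}y^iy^j}$ is irrational in $y$ while $\beta$ and $\eta_iy^i$ are polynomial in $y$: separating the identity into its $\alpha$-rational and $\alpha$-irrational parts and matching polynomial coefficients in $y$ forces $\eta_i=0$.

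Once the equivalence is established, it remains to derive $\mathbf{S}=0$ and $r_{00}=0$. For this I would apply the Cheng--Shen classification, Lemma \ref{ChSh}, to the condition $\mathbf{S}=(n+1)cF$. Case (ii) of the lemma gives exactly the desired conclusion: $r_{ij}=0$ (hence $r_{00}=0$), $s_j=0$, and $c=0$ (hence $\mathbf{S}=0$); moreover $b^ib_{i|j}=r_j+s_j=0$ yields $b^2$ constant, so $\beta$ is Killing of constant length. The remaining task is to rule out case (i) of the lemma, namely the existence of a constant $k\ne 0$ and scalar $\epsilon(x)$ satisfying $r_{ij}=\epsilon(b^2a_{ij}-b_ib_j)$, $s_j=0$, together with the ODE
\[
\Phi=-2(n+1)k\,\frac{\phi\,\Delta^2}{b^2-s^2}.
\]

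For $\phi(s)=1+s+s^2+s^3$, direct calculation gives $\phi'=1+2s+3s^2$, $\phi''=2+6s$, $\phi-s\phi'=1-s^2-2s^3$, from which one assembles the explicit rational expressions for $Q=\phi'/(\phi-s\phi')$, $\Delta=1+sQ+(b^2-s^2)Q'$, and $\Phi=-(n\Delta+1+sQ)(Q-sQ')-(b^2-s^2)(1+sQ)Q''$. Substituting into the above ODE and clearing denominators produces a polynomial identity in $s$, with $b^2$ as a parameter; treating $s$ as a formal variable (equivalently, localising at a point $x$ where $b^2(x)$ is generic) and matching the highest-degree coefficient in $s$ will force $k=0$. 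But $k=0$ reduces the ODE to $\Phi=0$, which by the Cheng--Wang--Wang result cited in the excerpt characterises Riemannian $(\alpha,\beta)$-metrics, contradicting the non-Riemannian hypothesis. Hence case (i) is excluded and only case (ii) of Lemma \ref{ChSh} can hold. The principal technical obstacle is this polynomial-matching step: $\Phi$ and $\Delta$ become unwieldy rational expressions when written out for this $\phi$, and care is needed to isolate a top-degree coefficient that is a nonzero polynomial in $b^2$ multiplied by $k$, so that $k=0$ follows independently of the geometric data.
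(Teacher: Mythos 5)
Your argument for the equivalence itself follows the same route as the paper: the nontrivial direction (ii)$\Rightarrow$(i) starts from ${\bf S}=(n+1)(cF+\eta)$ with $\eta=\eta_i(x)y^i$, substitutes $\phi(s)=1+s+s^2+s^3$ into the $(\alpha,\beta)$ formula for ${\bf S}$, clears denominators, and splits into $\alpha$-rational and $\alpha$-irrational parts. Be aware that ``matching coefficients forces $\eta_i=0$'' compresses the real work: in the resulting identity $\eta$ appears only in the combination $2\lambda(r_0+s_0)-(n+1)\eta$, so one must first extract $c=0$ (from the lowest-order term $-128(n+1)c\beta^{15}$, which is not divisible by $\alpha^2$), then $r_{00}=\tau\alpha^2$, and finally $\tau=0$ before $\eta=0$ falls out --- and that same expansion therefore already delivers $c=0$ and $r_{00}=0$. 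This is where you genuinely diverge: for the final assertions ${\bf S}=0$ and $r_{00}=0$ the paper simply reads them off from the polynomial identity and only invokes case (ii) of Lemma \ref{ChSh} at the very end, whereas you invoke the Cheng--Shen dichotomy and exclude its case (i). Your exclusion argument is sound, and in fact cheaper than you anticipate: after multiplying the ODE through by $(1-s^2-2s^3)^4$, the side $2(n+1)k\,\phi\Delta^2$ becomes a polynomial of degree $15$ in $s$ with leading coefficient $128(n+1)k$, while $(b^2-s^2)\Phi$ only reaches degree $13$, so $k=0$ with no dependence on $b^2$ whatsoever; even more directly, evaluating the identity at $s=\pm b$ kills $(b^2-s^2)\Phi$ while $\phi\Delta^2=\phi(b)^3/\bigl(\phi(b)-b\phi'(b)\bigr)^2\neq 0$, again forcing $k=0$, whence $\Phi=0$ and $F$ would be Riemannian, a contradiction. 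So each route buys something: the paper's single (enormous) expansion yields all conclusions at once, while your version isolates ${\bf S}=0$ and $r_{00}=0$ in a short, self-contained argument --- but you still cannot avoid the full expansion to dispose of $\eta$.
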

\begin{proof}
$(i) \rightarrow (ii)$ is obvious. Conversely,  suppose that $F$ has isotropic mean Berwald curvature, $\textbf{E}=\frac{(n+1)}{2}c(x)F^{-1}\textbf{h}$. Then we have
\begin{equation}\label{a4}
{\bf S}=(n+1)[cF+\eta],
\end{equation}
where $\eta=\eta_i(x)y^i$ is a 1-form on $M$. For the second approximate Matsumoto metric, (\ref{a2}) reduces to following
\begin{eqnarray}
\nonumber Q\!\!\!\!&=&\!\!\!\!\!\!\!\ -\frac{1+2s+3s^2}{-1+s^2+2s^3},\\
\nonumber \Theta \!\!\!\!&=&\!\!\!\!\!\!\!\ \frac{1}{2}\frac{1-6s^2-12s^3-15s^4-12s^5}{(1+s+s^2+s^3)(1-3s^2-8s^3+2b^2+6b^2s)},\\
\Psi \!\!\!\!&=&\!\!\!\!\!\!\!\ \frac{1+3s}{(1-3s^2-8s^3+2b^2+6b^2s)}.\label{a5}
\end{eqnarray}
By substituting (\ref{a4}) and (\ref{a5}) in (\ref{a3}), we have
\begin{eqnarray}
\nonumber {\bf S}=\!\!\!\!&&\!\!\!\!\!\!\!\!\!\ \Bigg[\frac{2(1+3s)(1+s+s^2+s^3)}{(-1+s^2+2s^3)^2}+\frac{2(1+3s)(1+2s+3s^2)s}{(1-3s^2-8s^3+2b^2+6b^2s)(-1+s^2+2s^3)}\\
\nonumber \!\!\!\!&-&\!\!\!\!\!\!\!\ \frac{2(5+26s+77s^2+88s^3-61s^4-430s^5-805s^6+4b^2+40b^2s+148b^2s^2)
(b^2-s^2)}{(1-3s^2-8s^3+2b^2+6b^2s)^2(-1+s^2+2s^3)^2}\\
\nonumber\!\!\!\!&-&\!\!\!\!\!\!\!\  \frac{2(256s^3b^2+252s^4b^2+216s^5b^2+108s^6b^2-828s^7-432s^8)(b^2-s^2)}
{(1-3s^2-8s^3+2b^2+6b^2s)^2(-1+s^2+2s^3)^2}\\
\nonumber \!\!\!\!&+&\!\!\!\!\!\!\!\ \frac{(n+1)(1+2s+3s^2)(1-6s^2-12s^3-15s^4-12s^5)}{(-1+s^2+2s^3)(1+s+s^2+s^3)(1-3s^2-8s^3+2b^2+6b^2s)}+2\lambda \Bigg]s_0\\
\nonumber \!\!\!\!&+&\!\!\!\!\!\!\!\ 2\bigg[\frac{(1+3s)}{1-3s^2-8s^3+2b^2+6b^2s}+\lambda \bigg]+\bigg[\frac{3(b^2-s^2)(1+11s^2+16s^3+2s)}{\alpha(1-3s^2-8s^3+2b^2+6b^2s)^2}\bigg]r_{00}\\
\nonumber \!\!\!\!&+&\!\!\!\!\!\!\!\ \bigg[\frac{(n+1)(1-6s^2-12s^3-15s^4-12s^5)}{2\alpha(1+s+s^2+s^3)(1-3s^2-8s^3+2b^2+6b^2s)}\bigg]r_{00}\\
=\!\!\!\!&&\!\!\!\!\!\!\!\!\!\!\ (n+1)\big[c\alpha(1+s+s^2+s^3)+\eta\big].\label{a6}
\end{eqnarray}
Multiplying (\ref{a6})  with  $(-1+s^2+2s^3)(1+s+s^2+s^3)(1-3s^2-8s^3+2b^2+6b^2s)^2\alpha^{14}$ implies that
\begin{eqnarray}
\nonumber M_1 + M_2\alpha^2 +M_3\alpha^4 +M_4\alpha^6 +M_5 \alpha^8 +M_6\alpha^{10}+M_7\alpha^{12}+M_8\alpha^{14}\\
\nonumber \ \ \ \ \  \ \ \ \  \ + \alpha \Big[M_9 +M_{10}\alpha^2 +M_{11}\alpha^4 +M_{12}\alpha^6 +M_{13}\alpha^8 +M_{14}\alpha^{10}\\
 +M_{15}\alpha^{12}+M_{16}\alpha^{14}\Big]=0,\label{a7}
\end{eqnarray}
where
\begin{eqnarray*}
M_1:=\!\!\!\!&&\!\!\!\!\!\!\!\!\!\ -128(n+1)c\beta^{15},\\
M_2:=\!\!\!\!&&\!\!\!\!\!\!\!\!\!\ 2\Big[(n+1)\big[(-385+96b^2)c\beta^2 -64\eta \beta\big]+128\lambda(r_0+s_0)\beta +48nr_{00}\Big]\beta^{11},\\
M_3:=\!\!\!\!&&\!\!\!\!\!\!\!\!\!\ -\Big[(n+1)[4(-283b^2+243+18b^4)c\beta^2 -6(32b^2-59)\eta \beta]\\
\!\!\!\!&+&\!\!\!\!\!\!\ [12(-59+32b^2)\lambda(r_0+s_0)-96((3n-1)s_0-r_0]\beta \\
\!\!\!\!&+&\!\!\!\!\!\!\ 3(-24b^2+36-219n+72nb^2)r_{00} \Big]\beta^9,\\
M_4:=\!\!\!\!&&\!\!\!\!\!\!\!\!\!\ \Big[(n+1)\big[-2(29-738b^2+208b^4)c\beta^2 -3(-172b^2+21+24b^4)\eta \beta\big]\\
\!\!\!\!&+&\!\!\!\!\!\!\ (-159nb^2-132-39n+96b^2)r_{00}-3(-48b^2+123+72nb^2-277n)s_0\beta \\
\!\!\!\!&+&\!\!\!\!\!\!\ 3(24b^2+86)r_0\beta+6\lambda(-172b^2+21+24b^4)(s_0+r_0)\beta \Big]\beta^7,\\
M_5:=\!\!\!\!&&\!\!\!\!\!\!\!\!\!\ \Big[(n+1)[-8(-27b^2+70b^4-41)c\beta^2 -4(47b^4-40-32b^2)\eta \beta]\\
\!\!\!\!&+&\!\!\!\!\!\!\ (15nb^2+20-55n+108b^2)r_{00}+8\lambda(47b^4-40-32b^2)(r_0+s_0)\beta\\
\!\!\!\!&-&\!\!\!\!\!\!\ 2\big[(-262b^2+278+303nb^2-147n)s_0-(94b^2+32)r_0\big]\beta \Big]\beta^5
\\
M_6:=\!\!\!\!&&\!\!\!\!\!\!\!\!\!\ 2\Big[(n+1)\big[-2(4b+1)(4b-1)(4b^2+13)c\beta^2 -10(1+20b^2+6b^4)\eta\beta\big]\\
\!\!\!\!&+&\!\!\!\!\!\!\ (11n-40b^2+35nb^2+20)r_{00}+20\lambda (1+20b^2+6b^4)(r_0+s_0)\beta\\
\!\!\!\!&-&\!\!\!\!\!\!\ 2(32n+51+129nb^2-294b^2)s_0\beta-(30b^2-50)r_0\beta \Big]\beta^3,\\
M_7:=\!\!\!\!&&\!\!\!\!\!\!\!\!\!\ -\Big[(n+1)\big[-4(-17b^2-7+30b^4)c\beta^2 -6(-1+10b^4)\beta\big]+(3n+12)b^2r_{00}\\
\!\!\!\!&-&\!\!\!\!\!\!\ 12\lambda(1-10b^4)(s_0+r_0) -6\big[(nb^2-n-2+14b^2)s_0-10b^2r_0\big]\beta \Big]\beta,\\
M_8:=\!\!\!\!&&\!\!\!\!\!\!\!\!\!\ 4\Big[(n+1)\Big[2(1+2b^2)(8b^2+1)c\beta +(1+2b^2)^2\eta\Big ]-2\lambda(1+2b^2)^2(s_0+r_0)\\
\!\!\!\!&+&\!\!\!\!\!\!\ \big[-57nb^2-64(n+1)b^4-8n-55b^2+6\big]r_{00}-(2-4b^2)r_0\\
\!\!\!\!&+&\!\!\!\!\!\!\  \big[n+(4+2n)b^2-1\big]s_0\Big],\\
M_9:=\!\!\!\!&&\!\!\!\!\!\!\!\!\!\ -416(n+1)c\beta^{14},\\
M_{10}:=\!\!\!\!&&\!\!\!\!\!\!\!\!\!\  \Big[(n+1)[(-1037+616b^2)c\beta^2 -288\eta \beta]+576\lambda(r_0+s_0)\beta +(204n-6)r_{00} \Big]\beta^{12},
\\
M_{11}:=\!\!\!\!&&\!\!\!\!\!\!\!\!\!\ -\frac{1}{2}\Big[ (n+1)[8(57b^4-385b^2+143)c\beta^2 -2(424b^2-267)\eta \beta]\\
\!\!\!\!&+&\!\!\!\!\!\!\  \big[4\lambda(424b^2-267)(r_0+s_0)-4(-115+330n)s_0+1320r_0\big]\beta\\
\!\!\!\!&+&\!\!\!\!\!\!\  (300nb^2+249-189n-120b^2)r_{00} \Big]\beta^8,
\\
M_{12}:=\!\!\!\!&&\!\!\!\!\!\!\!\!\!\ 4\Big[(n+1)[-(572b^4-932b^2-275)c\beta^2 -4(39b^4-28-102b^2)\eta \beta]\\
\!\!\!\!&-&\!\!\!\!\!\!\ 75nb^2-62-89n+144b^2r_{00}+8\lambda(39b^4-28-102b^2)(r_0+s_0)\beta\\
\!\!\!\!&-&\!\!\!\!\!\!\ 6[(-58b^2+100+81nb^2-109n)s_0-(26b^2+34)r_0]\beta \Big]\beta^6,
\end{eqnarray*}
\begin{eqnarray*}
M_{13}:=\!\!\!\!&&\!\!\!\!\!\!\!\!\!\ \Big[(n+1)[-8(-24+35b^2+47b^4)c\beta^2 -6(26b^4-11+20b^2)\eta \beta]\\
\!\!\!\!&+&\!\!\!\!\!\!\  (51nb^2+33-6n+24b^2)r_{00}+12\lambda(26b^4-11+20b^2)(s_0+r_0)\\
\!\!\!\!&-&\!\!\!\!\!\!\  6(-118b^2+54+83nb^2-3n)s_0\beta+6(26b^2-10)r_0\beta\Big]\beta^4,
\\
M_{14}:=\!\!\!\!&&\!\!\!\!\!\!\!\!\!\ -\Big[(n+1)[-(56b^4-272b^2-39)c\beta^2 -4(7(n+1)b^4-6(1+n)-22nb^2)\eta \beta]\\
\!\!\!\!&+&\!\!\!\!\!\!\ (-7nb^2-8-5n+32b^2)r_{00}+8\lambda(7b^4-6-22b^2)(r_0+s_0)\beta\\
\!\!\!\!&+&\!\!\!\!\!\!\ 2(-154b^2+12+33nb^2+19n)s_0\beta+2(14b^2-22)s_0\beta \Big]\beta^2\\
M_{15}:=\!\!\!\!&&\!\!\!\!\!\!\!\!\!\ \Big[(n+1)[4(23b^4+5b^2-1)c\beta^2 -(14b^2+1)(2b^2+1)\eta \beta]\\
\!\!\!\!&-&\!\!\!\!\!\!\ \frac{n+1}{2}(8b^2+1)r_{00}-2\lambda(14b^2+1)(2b^2+1)(s_0+r_0)\beta\\
\!\!\!\!&-&\!\!\!\!\!\!\ 2(-6b^2+3-5nb^2)s_0\beta-2(4+14b^2)r_0\beta \Big],\\
M_{16}:=\!\!\!\!&&\!\!\!\!\!\!\!\!\!\ (n+1)(1+2b^2)^2c
\end{eqnarray*}
The term of (\ref{a7}) which is seemingly does not contain $\alpha^2$ is $M_1$. Since $\beta^{15}$ is not divisible by $\alpha^2$, then  $c=0$ which implies that
\[
M_1=M_9=0.
\]
Therefore (\ref{a7}) reduces to following
\begin{align}
&M_2+M_3\alpha^2 +M_4\alpha^4 +M_5 \alpha^6 +M_6\alpha^{8}+M_7\alpha^{10}+M_8\alpha^{12}=0,\\
&M_{10}+M_{11}\alpha^2 +M_{12}\alpha^4 +M_{13}\alpha^6 +M_{14}\alpha^{8} +M_{15}\alpha^{10}+M_{16}\alpha^{12}=0.\label{a8}
\end{align}
By plugging $c=0$ in $M_2$ and $M_{10}$, the only equations that don't contain $\alpha^2$ are the following
\begin{align}
8\Big[8(2\lambda(r_0+s_0)-(n+1)\eta )+6nr_{00}\Big]&=\tau_1 \alpha^2,\label{a9}\\
6\Big[48(2\lambda(r_0+s_0)-(n+1)\eta) +(34n-1)r_{00}\Big]&=\tau_2 \alpha^2, \label{a10}
\end{align}
where $\tau_1=\tau_1(x)$ and $\tau_2=\tau_2(x)$ are scalar functions on $M$. By eliminating $[2\lambda(r_0+s_0)-(n+1)\eta]$ from (\ref{a9}) and (\ref{a10}), we get
\begin{equation}\label{a11}
r_{00}=\tau \alpha^2,
\end{equation}
where $\tau=\frac{\tau_2 -\tau_1}{-(18n+1)}$. By (\ref{a9}) or (\ref{a10}), it follows that
\begin{equation}\label{a12}
2\lambda(r_0+s_0)-(n+1)\eta=0.
\end{equation}
By (\ref{a11}), we have $r_0=\tau \beta$. Putting  (\ref{a11}) and (\ref{a12}) in $M_{10}$ and $M_{11}$ yield
\begin{eqnarray}
M_{10}\!\!\!\!\!\!&=&\!\!\!\!\!\!\!(204n-6)\tau \alpha^2 \beta^{12},\label{m10}\\
M_{11}\!\!\!\!\!\!\!&=&\!\!\!\!\!\!\!\Big[[(660n-230)s_0-660r_0]\beta - \frac{(300n-120)b^2+249-189n}{2}r_{00}\tau \alpha^2\Big]\beta^9.
\label{m11}
\end{eqnarray}
By putting (\ref{m10}) and (\ref{m11}) into (\ref{a8}), we have
\begin{eqnarray}
\nonumber \!\!\!\!\!\!&&\!\!\!\!\![(660n-230)s_0-660r_0]\beta^{10}-\frac{300nb^2+249-189n-120b^2}{2}r_{00}\tau \alpha^2\beta^9 \\
\!\!\!\!\!\!&+&\!\!\!\!\! (204n-6)\tau \beta^{12}-M_{12}\alpha^2 +M_{13}\alpha^4 +M_{14}\alpha^6+M_{15}\alpha^8+M_{16}\alpha^{10}=0.\label{m12}
\end{eqnarray}
The only equations of (\ref{m12}) that do not contain $\alpha^2$ is $[(204n-6)\tau \beta^2+(660n-230)s_0-660r_0]\beta^{10}$. Since $\beta^{10}$ is not divisible by $\alpha^2$, then we have
\begin{equation}
[(204n-6)\tau \beta^2+(660n-230)s_0-660r_0]=0.\label{o2}
\end{equation}
By Lemma \ref{ChSh}, we always have $s_j=0$. Then (\ref{o2}), reduces to following
\begin{equation}
(204n-6)\tau \beta^2-660r_0=0.\label{o5}
\end{equation}
Thus
\be
2(204n-6)\tau b_i\beta-660\tau b_i=0.\label{o6}
\ee
By multiplying (\ref{o6}) with $b^i$, we have 
\[
\tau=0.
\]
Thus by (\ref{a12}),  we get  $\eta=0$ and then ${\bf S}=(n+1)cF$. By (\ref{a11}),  we get $r_{ij}=0$.  Therefore Lemma \ref{ChSh}, implies that  ${\bf S}=0$.  This completes the proof.
\end{proof}

\bigskip

\noindent
{\bf Proof of Theorem \ref{MAINTHM}:} Let $F$ be an isotropic Berwald metric (\ref{IBC})  with almost isotropic flag curvature (\ref{AIFC}). In \cite{TR}, it is proved that every isotropic Berwald metric (\ref{IBC})  has isotropic S-curvature (\ref{ISC}).

Conversely, suppose that $F$ is of isotropic S-curvature  (\ref{ISC}) with scalar flag curvature ${\bf K}$. In \cite{NST}, it is showed that every Finsler metric of  isotropic S-curvature  (\ref{ISC}) has almost isotropic flag curvature (\ref{AIFC}). Now, we are going to prove that $F$ is a isotropic Berwald metric. In \cite{CS}, it is proved that $F$ is an isotropic Berwald metric (\ref{IBC}) if and only if it is a Douglas metric with isotropic mean Berwald curvature (\ref{IMBC}). On the other hand, every Finsler metric of  isotropic S-curvature (\ref{ISC}) has  isotropic mean Berwald curvature (\ref{IMBC}). Thus for completing the proof, we must  show that $F$ is a Douglas metric. By Proposition \ref{prop1}, we have ${\bf S}=0$. Therefore by Theorem 1.1 in \cite{NST},  $F$ must be of isotropic flag curvature $\boldsymbol{K}=\sigma(x)$. By Proposition \ref{prop1}, $\beta$ is  a Killing $1$-form with constant length with respect to $\alpha$, that is, $r_{ij}=s_j=0$. Then (\ref{b2}), (\ref{b3}) and (\ref{b4}) reduce to
\be
G^i-\bar{G}^i=\alpha Qs^i_{\ 0}, \ \ \ J_i=-\frac{\Phi s_{i0}}{2\alpha \Delta}, \ \  \ \bar{J}=0.\label{o7}
\ee
By (\ref{bb}), we get
\be
I_ib^i=\frac{-\Phi}{2\Delta F}(\phi -s\phi')(b^2-s^2).\label{o8}
\ee
We consider two case:\\\\
{\bf Case 1.} Let $dim M\geq 3$. In this case, by Schur Lemma $F$ has constant flag curvature and  (\ref{b5}) holds.  Thus by (\ref{o7}) and (\ref{o8}),  the equation (\ref{b5}) reduces to following
\begin{equation}
\frac{\Phi s_{i0}}{2\alpha \Delta}a^{ik}s_{k0}+\frac{\Phi s_{l0}}{2\alpha \Delta}\Big(\alpha Qs^l_{\ 0}\Big)_{.i}\ b^i-\boldsymbol{K}F\frac{\Phi}{2\Delta}(\phi -s\phi')(b^2-s^2)=0.\label{o3}
\end{equation}
By assumption $\Phi \neq 0$. Thus by (\ref{o3}), we get
\begin{equation}
s_{i0}s^i_{\ 0}+s_{l0}\Big(\alpha Qs^l_{\ 0}\Big)_{.i}\ b^i-\boldsymbol{K}F\alpha(\phi -s\phi')(b^2-s^2)=0.\label{o4}
\end{equation}
The following holds
\[
\Big(\alpha Qs^l_{\ 0}\Big)_{.i}\ b^i=sQs^i_{\ 0}+Q's^i_{\ 0}(b^2-s^2).
\]
Then (\ref{o4}) can be rewritten as follows
\begin{equation}\label{a14}
s_{i0}s^i_{\ 0}\Delta -\boldsymbol{K}\alpha^2 \phi(\phi -s\phi')(b^2-s^2)=0.
\end{equation}
By (\ref{b1}), (\ref{a5}) and (\ref{a14}), we obtain
\begin{eqnarray}
\nonumber \!\!\!\!\!\!&&\!\!\!\!\!\!\ \Big[1- \frac{s(1+2s+3s^2)}{(-1+s^2+2s^3)}+\frac{2(b^2-s^2)(1+3s)(1+s+s^2+s^3)}{(-1+s^2+2s^3)^2}\Big]s_{i0}s^i_{\ 0}\\
\!\!\!\!\!\!\!\!\!\!&-&\!\!\!\!\!\!\ \boldsymbol{K}(1+s+s^2+s^3)\alpha^2\big[1+s+s^2+s^3-s(1+2s+3s^2)\big](b^2-s^2)=0.\label{a16}
\end{eqnarray}
Multiplying (\ref{a16}) with $(-1+s^2+2s^3)^2\alpha^{12}$ yields
\[
A+\alpha B=0,
\]
where
\begin{align}
\nonumber A&=-\boldsymbol{K}b^2\alpha^{14}+(2b^2+1)(\boldsymbol{K}\beta^2 + s_{i0}s^i_{\ 0}) \alpha^{12}\\
\nonumber &+2(3 \boldsymbol{K} b^2 \beta^2 +4 s_{i0}s^i_{\ 0} b^2 - \boldsymbol{K} \beta^2- s_{i0}s^i_{\ 0} )\beta^2 \alpha^{10}\\
\nonumber &-(6\boldsymbol{K}\beta^2+11s_{i0}s^i_{\ 0} +20\boldsymbol{K}\beta^2 b^2-6s_{i0}s^i_{\ 0}) \beta^4 \alpha^8\\
\nonumber &-(-20\boldsymbol{K}\beta^2 +5\boldsymbol{K}\beta^2 b^2+8s_{i0}s^i_{\ 0})\beta^6 \alpha^6 \nonumber+(\boldsymbol{K}\beta^{10})(26b^2+5)\alpha^4 \\
&-2\boldsymbol{K}\beta^{12}(13-4b^2)\alpha^2 -8\boldsymbol{K}\beta^{14},\label{a15}\\
\nonumber B&=-(\boldsymbol{K}b^2\beta)\alpha^{12}+(1+8b^2)(\boldsymbol{K}b^2\beta^2 +s_{i0}s^i_{\ 0})\beta\alpha^{10}\\
\nonumber &-2(3 \boldsymbol{K} b^2 \beta^2 -4 s_{i0}s^i_{\ 0} b^2 +4 \boldsymbol{K} \beta^2+5 s_{i0}s^i_{\ 0} )\beta^3 \alpha^8\\
\nonumber &+(6\boldsymbol{K}\beta^2-11s_{i0}s^i_{\ 0} -20\boldsymbol{K}\beta^2 b^2) \beta^5 \alpha^6 \\
\nonumber &+(5\boldsymbol{K}\beta^9)(3b^2+4)\alpha^4+(5\boldsymbol{K}\beta^{11})(-3+4b^2)\alpha^2-20\alpha\boldsymbol{K}\beta^{13}.
\end{align}
Obviously, we have $A=0$ and $B=0$.

By $A=0$ and the fact that $\beta^{14}$ is not divisible by $\alpha^2$, we get  ${\bf K}=0$. Therefore (\ref{a16}) reduces to following
\[
s_{i0}s^i_{\ 0 }= a_{ij}s^j_{\ 0}s^i_{\ 0} = 0.
\]
Because of positive-definiteness of the Riemannian metric $\alpha$, we have $s^i_{\ 0} = 0$, i.e., $\beta$ is closed. By $r_{00}=0$ and $s_0=0$, it follows that $\beta$ is parallel with respect to $\alpha$. Then $F=\alpha+\beta+\frac{\beta^2}{\alpha}+\frac{\beta^3}{\alpha^2}$ is a Berwald metric. Hence $F$ must be locally Minkowskian.\\\\
{\bf Case 2.} Let $dim\ M=2$.  Suppose that $F$ has  isotropic Berwald curvature (\ref{IBC}). In \cite{TR}, it is proved that every isotropic Berwald metric (\ref{IBC})  has isotropic S-curvature,   ${\bf S}=(n+1)cF$. By Proposition \ref{prop1}, $c=0$. Then by (\ref{IBC}),  $F$ reduces to a Berwald metric. Since $F$ is non-Riemannian, then by Szab\'{o}'s rigidity Theorem for Berwald surface (see \cite{BCS} page 278), $F$ must be locally Minkowskian.
\qed

\bigskip

\bigskip
\noindent
{\bf Acknowledgment.}  The authors would like to thank the referee's valuable suggestions.

%-----------------------------------------------------------------------------------------------------------------------

\noindent
Akbar Tayebi and  Tayebeh Tabatabaeifar\\
Faculty  of Science, Department of Mathematics\\
University of Qom \\
Qom. Iran\\
Email:\ akbar.tayebi@gmail.com\\
Email:\  t.tabaee@gmail.com

\bigskip

\noindent
Esmaeil Peyghan\\
Faculty  of Science, Department of Mathematics\\
Arak University 38156-8-8349\\
Arak. Iran\\
Email: epeyghan@gmail.com

\end{document}